\theoremstyle{plain}
\newtheorem{theorem}{Theorem}[section]
\newtheorem{lemma}[theorem]{Lemma}
\newtheorem{proposition}[theorem]{Proposition}
\theoremstyle{definition}
\theoremstyle{remark}
\numberwithin{equation}{section}
\renewcommand{\bar}[1]{\overline{#1}}
\renewcommand{\tilde}[1]{\widetilde{#1}}
\renewcommand{\rho}{\varrho}
\renewcommand{\epsilon}{\varepsilon}
\title{The minimal quasi-stationary distribution of the absorbed $M/M/\infty$ queue}
\author{Elie~Cerf\footnote{Université Sorbonne Paris Nord, LAGA, CNRS, UMR 7539, Laboratoire d'excellence Inflamex, F-93430 Villetaneuse, France.
		cerf@math.univ-paris13.fr} }
\date{July 13, 2023}
\begin{document}

\maketitle

\begin{abstract}
	 We give in this paper two characterizations of the minimal exponential rate of survival $\theta^*$ of the $M/M/\infty$ queue. The first one is a derivation of a known result on the duration of excursions of this process. The second one was conjectured by Mart\'{\i}nez and Ycart \cite{MartinezYcart_cutoff} and is proved using complex analysis by establishing a connection with the first characterization in terms of the incomplete gamma function.
\end{abstract}

\section{Introduction}

Throughout the end of the last century, quasi-stationary distributions have emerged as a powerful tool to study the long time behavior of absorbed Markov processes. Let us consider a Markov process $(X_t)_{t\geq0}$ evolving on a state space $E=\tilde{E}\cup \Delta$, where $\Delta$ is the set of absorbing states and assume that the process is almost surely eventually absorbed, meaning that if we denote by $T := \inf\{s\geq 0, X_s \in \Delta \}$ the time of absorption, then for any initial distribution $\mu$ \[\mathbb{P}_\mu(T<\infty)=1.\] In this setting, $(X_{t\wedge T})_{t\geq0}$ admits only trivial stationary distributions concentrated on $\Delta$. Therefore we define its quasi-stationary distribution as follows. Let $\nu$ be a probability measure on $\tilde{E}$, it is said to be a quasi-stationary distribution of $(X_t)_{t\geq0}$ if $\nu$ verifies for all $i\in \tilde{E}$ : 

\begin{equation}\label{eqn:defqsd}
	\mathbb{P}_\nu (X_t=i | t<T ) = \nu(i). 
\end{equation}

In fact, like stationary measures, quasi-stationary distributions can be linked to the limiting behavior of the process $(X_t)_{t\geq0}$, more precisely, one can show that, when it exists for any $j\in \tilde{E}$, the following Yaglom limit does not depend on $j$ and uniquely defines a quasi-stationary distribution called the minimal quasi-stationary distribution:  

\begin{equation}\label{eqn:defnu*}
	\nu^*(i) = \underset{t\to\infty}{\lim}\mathbb{P}_j(X_t = i | t<T ), \mbox{ } \forall i\in \tilde{E}. 
\end{equation}

One can also remark that $\eqref{eqn:defqsd}$ implies that for any quasi-stationary distribution $\nu$, $T$ is exponentially distributed under distribution $\mathbb{P}_\nu$ \cite[Theorem 2.2. p.~19]{ColletMartinez_book}, meaning that if we denote by $\theta(\nu)$ its parameter, we have for all $t>0$ 
\begin{equation}\label{eqn:expkilling}
	\mathbb{P}_\nu(T>t) = e^{-\theta(\nu) t}.
\end{equation}
The parameter $\theta(\nu)$ is sometimes called the exponential rate of survival of $\nu$ and is maximal for the minimal quasi-stationary distribution $\nu^*$ \[\theta(\nu^*) = \sup\{\theta(\nu),\mbox{ } \nu \text{ is a quasi-stationary distribution of $X$}\}.\] Hence, studying the long-time behavior of an absorbed process $(X_t)_{t\geq 0}$ appears as strongly linked to the study of its minimal quasi-stationary distribution and of its parameter.  Moreover, $\eqref{eqn:expkilling}$ shows that a necessary condition for the existence of quasi-stationary distributions is the existence of an exponential moment for the time of absorption $T$ \cite{good_1968}, i.e 
\begin{equation}\label{eqn:deftheta*}
	\theta^* := \sup\{\theta>0, \exists i\in\mathbb{N}^* : \mathbb{E}_i[e^{\theta T}]<\infty \} > 0,
\end{equation}
and we call $\theta^*$ the exponential rate of survival of the process $(X_t)_{t\geq0}$. For a complete overview on quasi-stationary distribution, the reader can refer to the book by Collet et al. \cite{ColletMartinez_book}.

In this paper, we study the quasi-stationary distributions of a birth-and-death process on $\mathbb{N}$, with absorption at $0$. The applications of birth-and-death processes in biology to the study of the dynamics of populations have motivated several authors to look at the quasi-stationary distributions of such processes \cite{MeleardVillemonaisQSDpopulation}. In fact, it has been shown that the condition $\{\theta^*>0\}$ is not only necessary but sufficient for the existence of quasi-stationary distributions \cite{Ferrariexistenceqsd}. Moreover, in this case, for any given $\theta\in(0,\theta^*]$, one can construct a unique quasi-stationary distribution $\nu_\theta$ with exponential rate of survival $\theta$ of $(X_t)_{t\geq0}$ and vice-versa. And it is also the parameter of the minimal quasi-stationary distribution $\nu^*$ defined by $\eqref{eqn:defnu*}$, i.e \[ \theta^* = \theta(\nu^*). \] Therefore, it is crucial to be able to compute or at least approximate the minimal rate of survival $\theta^*$ if we want to understand the quasi-stationarity of a birth-and-death process. In some cases, authors have been able to give an explicit characterization of $\theta^*$, like for the symmetric random walk or the linear birth-and-death chain \cite{vanDoornBDP91}. But other more difficult or general cases are left open problems and still studied \cite{VanDoorn2006withKillling}, \cite{MeleardBDP2022}. The aim of this paper is to focus on the case of the $M/M/\infty$ queue.

Let us now formally introduce our process of interest. Let $(X_t)_{t\geq0}$ be a continuous-time birth-and-death Markov chain on $\mathbb{N}=\{0,1,2,\dots\}$, with $\Delta=\{0\}$ as an absorbant state, counting the number of customer in a queue system with rate matrix $Q:=(q_{i,j})_{i,j\in\mathbb{N}}$ given by : \[ q_{i,i+1} = a, \mbox{ } q_{i,i-1} = iq, \mbox{ } , \mbox{ } q_{i,i}=-(a+iq), \mbox{ for } i\in\mathbb{N^*}. \] In this process, a new customer arrives at rate $a>0$ while each customer is served at rate $q>0$. Thus, it is usually called the $M/M/\infty$ queue in relation to Kendall's notation in queueing theory : Makovian arrivals/Markovian service time/ infinite number of servers. These processes are of great interest in queueing theory and have been studied by computer scientists for a long time. In particular, in \cite{Guilleminsimonian} Guillemin and Simonian computed a formula for the Laplace transform of excursion times above any given level of the $M/M/\infty$ queue using Kummer functions, and in \cite{GuilleminPinchon98} Guillemin and Pinchon managed to get another, simpler formula for the same variable through the study of continued fractions linked to the process. 

In an article studying the decay rates of birth-and-death processes, Mart\'{\i}nez and Ycart found an upper bound for the exponential rate of survival $\theta^*$ of the $M/M/\infty$ queue and conjectured its exact value \cite{MartinezYcart_cutoff}. We validate this conjecture in this article, proving the following result : 

\begin{theorem}\label{thm:main}
	The exponential rate of survival $\theta^*$ of $(X_t)_{t\geq0}$ is the unique solution in $(0,q)$ of : 
	\begin{equation}\label{eqn:theta*isgamma*}
		1=\frac{a}{q}\int_{0}^{1}(1-x)^{-\frac{\theta}{q}}e^{-\frac{ax}{q}}dx.
	\end{equation}
\end{theorem}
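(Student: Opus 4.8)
The plan is to identify $\theta^*$, defined via \eqref{eqn:deftheta*} as the abscissa of convergence of exponential moments of the absorption time $T$, with the quantity appearing in \eqref{eqn:theta*isgamma*}. I would approach this in two movements. First, I would establish a workable analytic handle on $\mathbb{E}_i[e^{\theta T}]$. Starting from state $1$, the absorption time $T$ at $0$ for the $M/M/\infty$ queue is, up to a sign change in time, an excursion-type quantity; the Laplace/moment transform $u_i(\theta) := \mathbb{E}_i[e^{\theta T}]$ satisfies the standard backward recursion coming from the generator $Q$, namely $(a + iq - \theta)u_i = a\, u_{i+1} + iq\, u_{i-1}$ with $u_0 = 1$. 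The formulas of Guillemin--Simonian \cite{Guilleminsimonian} and Guillemin--Pinchon \cite{GuilleminPinchon98} for excursion times, expressed through Kummer (confluent hypergeometric) functions, give a closed form for the generating function $\sum_i u_i(\theta) z^i$ or directly for $u_1(\theta)$; the first characterization alluded to in the abstract is exactly this. The key point to extract is that $u_1(\theta)$ has an explicit expression whose finiteness is governed by an incomplete-gamma-type integral.

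Second, and this is where \eqref{eqn:theta*isgamma*} comes in, I would massage that closed form into the integral $\frac{a}{q}\int_0^1 (1-x)^{-\theta/q} e^{-ax/q}\,dx$. Writing $s = \theta/q$, the substitution $1 - x = t$ turns this into $\frac{a}{q} e^{-a/q}\int_0^1 t^{-s} e^{at/q}\,dt$, which after the further change $r = at/q$ is a lower incomplete gamma function $\gamma(1-s, a/q)$ times an explicit prefactor — precisely the kind of object one expects from integrating the confluent hypergeometric representation of the excursion Laplace transform, since ${}_1F_1$ has an Euler-type integral representation $\int_0^1 t^{b-1}(1-t)^{c-b-1}e^{xt}\,dt$. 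So the analytic core is a hypergeometric identity: rewrite the Guillemin--Pinchon closed form using Euler's integral for ${}_1F_1$ (or Kummer's transformation) to display $u_1(\theta)$ as a constant times $\left(1 - \frac{a}{q}\int_0^1 (1-x)^{-\theta/q}e^{-ax/q}\,dx\right)^{-1}$ or some monotone function thereof. The pole structure in $\theta$ of that expression then pins down $\theta^*$ as the first $\theta \in (0, q)$ at which the denominator vanishes, i.e. the solution of \eqref{eqn:theta*isgamma*}.

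Finally I would handle the ``uniqueness in $(0,q)$'' and ``$\theta^* \in (0,q)$'' claims separately and elementarily. Set $F(\theta) := \frac{a}{q}\int_0^1 (1-x)^{-\theta/q} e^{-ax/q}\,dx$. For $\theta < q$ the exponent $-\theta/q > -1$, so the integral converges and $F$ is finite; moreover $F$ is strictly increasing in $\theta$ on $(0,q)$ (the integrand $(1-x)^{-\theta/q}$ is increasing in $\theta$ pointwise for $x \in (0,1)$), continuous there, with $F(\theta) \to \frac{a}{q}\int_0^1 e^{-ax/q}\,dx = 1 - e^{-a/q} < 1$ as $\theta \downarrow 0$ and $F(\theta) \to +\infty$ as $\theta \uparrow q$ (the singularity at $x=1$ becomes non-integrable). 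Hence there is exactly one $\theta \in (0,q)$ with $F(\theta) = 1$, which establishes both well-posedness of the statement and the fact that the root lies strictly inside $(0,q)$. Combining this with the identification of the denominator of $u_1(\theta)$ from the second movement completes the proof.

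The main obstacle I anticipate is the second movement: converting the known Kummer-function formula for the excursion Laplace transform into the specific incomplete-gamma integral form of \eqref{eqn:theta*isgamma*}, and doing so in a way that makes the analytic continuation in $\theta$ and the location of the first singularity transparent. This requires care with the Euler integral representation of ${}_1F_1$, its domain of validity (the representation $\int_0^1 t^{b-1}(1-t)^{a-b-1}e^{zt}\,dt$ needs $\mathrm{Re}(a) > \mathrm{Re}(b) > 0$, which translates into a constraint like $\theta/q < 1$ and forces the relevant branch), and with the complex-analytic argument — presumably a contour/monodromy or Pringsheim-type argument — showing that $\theta^*$ is genuinely the abscissa of convergence and not merely a point where one particular representation breaks down. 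The birth-and-death recursion and the monotonicity analysis of $F$ are routine by comparison.
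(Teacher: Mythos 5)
Your outline follows the same route as the paper: use the Guillemin--Pinchon Laplace transform of the absorption time to characterize $\theta^*$ as the zero in $(0,q)$ of the denominator $\bar{\gamma}(-\theta/q,-a/q)=\sum_{k\geq 0}\frac{(a/q)^k}{(k-\theta/q)\,k!}$, then identify that zero with the root of $1=\frac{a}{q}\int_0^1(1-x)^{-\theta/q}e^{-ax/q}\,dx$, whose existence and uniqueness in $(0,q)$ follow from the monotonicity argument you give (that part is fine and is exactly the Mart\'{\i}nez--Ycart observation). The problem is that your ``second movement'' --- converting the closed form into this integral --- is precisely the mathematical content of the theorem, and you leave it as an acknowledged obstacle rather than proving it. As it stands the proposal has a genuine gap at its core: no identity is actually established between the series $\bar{\gamma}(-\theta/q,-a/q)$ (equivalently the Kummer-function denominator) and the integral at the relevant parameters $s=-\theta/q\in(-1,0)$, $x=-a/q<0$. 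Note also that your substitution does not yield $\gamma(1-s,a/q)$: the exponential comes out with a plus sign, so what appears is an incomplete gamma at the \emph{negative} argument $-a/q$; this sign is exactly why a naive real-variable identification fails at the needed parameters and why some continuation argument is unavoidable.

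The missing step can be carried out far more simply than the Euler-integral, contour or monodromy machinery you anticipate. Set $F(s,x):=-x\int_0^1(1-y)^s e^{xy}\,dy$. For real $s,x>0$, the substitution $u=1-y$ followed by a single integration by parts gives $F(s,x)=1-s\,e^x\,\bar{\gamma}(s,x)$, where $\bar{\gamma}(s,x)=x^{-s}\gamma(s,x)=\sum_{k\geq0}\frac{(-x)^k}{(k+s)k!}$. Both sides are holomorphic in $x\in\mathbb{C}$ for fixed $s$ with $\mathcal{R}e(s)>-1$, and holomorphic in $s$ on $\{\mathcal{R}e(s)>-1\}$ for fixed $x$ (dominate the integrand by $(1-y)^{r_{\min}}e^{\mathcal{R}e(x)y}$), so two successive applications of the identity theorem --- first in $x$ for fixed $s>0$, then in $s$ for fixed $x$ --- extend the identity to $\{\mathcal{R}e(s)>-1\}\times\mathbb{C}$. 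Evaluating at $s=-\theta/q$, $x=-a/q$ turns $\bar{\gamma}(-\theta^*/q,-a/q)=0$ into $F(-\theta^*/q,-a/q)=1$, which is \eqref{eqn:theta*isgamma*}. Finally, your worry that $\theta^*$ might only be a point where one representation breaks down is settled by the result you would in any case need from \cite{GuilleminPinchon98}: $\bar{\gamma}(\cdot,-a/q)$ has a unique root in $(-1,0)$ and this root is the first singularity of $\theta\mapsto\mathbb{E}_1[e^{\theta T}]$; cite this explicitly rather than relying on a hoped-for Pringsheim-type argument.
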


In a first part, we use the work of Guillemin and Pinchon \cite{GuilleminPinchon98} to give a first characterization of $\theta^*$ using a special function. Then, by complex analysis arguments, we prove that this first characterization is equivalent to the one of Theorem \ref{thm:main}.

\section{Laplace transform of the time of absorption}

Our first characterization of the exponential rate of survival of $(X_t)_{t\geq0}$ is given by its definition $\eqref{eqn:deftheta*}$ as the first singularity point of the Laplace transform of the time of absorption of the process. Remark that, thanks to the irreducibility of $(X_t)_{t\geq0}$ the point of explosion of $\theta\mapsto\mathbb{E}_i[e^{\theta T }]$ does not depend on $i$, so we may only consider the trajectories of the process starting at $1$ and write 

\begin{equation}\label{eqn:theta*apartirde1}
	\theta^* = \sup\{\theta>0, \mathbb{E}_1[e^{\theta T }] < \infty\}.
\end{equation}

Guillemin and Pinchon \cite{GuilleminPinchon98} made an exhaustive study of the excursions of the non-killed $M/M/\infty$ queue. In particular, they were able to compute an explicit formula for the Laplace transform of the duration of an excursion by an $M/M/\infty$ queue above any given state $C\in\mathbb{N}$. We use this to compute the Laplace transform of $T$ as the length of an excursion above level $0$. 

\begin{proposition}[Laplace transform of $T$]\label{prop:LaplaceT}
	For any $s\in\mathbb{C}\setminus\{0, -1 , -2, \dots\}$ and $x\in\mathbb{C}$, define 
	\begin{equation}\label{eqn:bargamma}
		\bar{\gamma}(s,x) := \sum_{k=0}^{\infty} \frac{(-x)^k}{(k+s)k!}.
	\end{equation}
	The Laplace transform of $T = \inf\{t>0, X_t=0\}$ is given by :
	\begin{equation}\label{eqn:LaplaceT}
		\mathbb{E}_1[e^{\theta T }]= \frac{a-\theta}{a} - \frac{1}{\frac{a}{q}e^{-\frac{a}{q}}\bar{\gamma}(-\frac{\theta}{q},-\frac{a}{q})}, \mbox{ for any } \theta>0.
	\end{equation}
\end{proposition}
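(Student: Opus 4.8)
The plan is to read $\eqref{eqn:LaplaceT}$ off the formula of Guillemin and Pinchon \cite{GuilleminPinchon98} for the Laplace transform of an excursion above a level $C$, specialised to $C=0$. Since $(X_t)_{t\geq0}$ leaves $0$ only by jumping to $1$, the strong Markov property identifies the duration of the first excursion of $(X_t)_{t\geq0}$ above $0$ with $T$ started from $1$, so $\mathbb{E}_1[e^{\theta T}]$ is exactly the quantity they compute when $C=0$; the first step is to transcribe their result into our notation and to simplify it in the degenerate case $C=0$, where their general expression collapses. Equivalently, and as a useful cross-check, a one-step analysis at the first jump out of state $i$ shows that $u_i(\theta):=\mathbb{E}_i[e^{\theta T}]$ satisfies $u_0=1$ together with the three-term recurrence $a\,u_{i+1}=(a+iq-\theta)u_i-iq\,u_{i-1}$ for $i\ge1$, and that $\mathbb{E}_1[e^{\theta T}]$ is the value $u_1$ selecting the sub-dominant solution of this recurrence: for small $\theta>0$ the probabilistically meaningful solution grows only polynomially in $i$, whereas the competing solution grows like $(i-1)!\,(q/a)^{i-1}$.

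The computational core is to match the closed form coming from \cite{GuilleminPinchon98} with the right-hand side of $\eqref{eqn:LaplaceT}$. Their formula is phrased through Kummer's confluent hypergeometric function $M$ (and, upstream, through the continued fraction attached to the recurrence above), so I would use the standard dictionary linking it to the series $\bar{\gamma}$ of $\eqref{eqn:bargamma}$ --- namely $\bar{\gamma}(s,x)=\tfrac1s M(s,s+1,-x)=x^{-s}\gamma(s,x)$ with $\gamma$ the lower incomplete gamma function, together with the contiguity relation $x\,\bar{\gamma}(s+1,x)=s\,\bar{\gamma}(s,x)-e^{-x}$ --- and apply these with $s=-\theta/q$, $x=-a/q$ to bring the expression into the stated shape; the constant $\tfrac aq e^{-a/q}$ and the affine term $\tfrac{a-\theta}{a}$ should both fall out of this bookkeeping. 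This rewriting in terms of $\bar{\gamma}$, hence of the incomplete gamma function, is moreover what will prepare the bridge to Theorem \ref{thm:main} in the next section, through the identity $\int_0^1(1-x)^{-\theta/q}e^{-ax/q}\,dx=e^{-a/q}\,\bar{\gamma}\big(1-\tfrac\theta q,-\tfrac aq\big)$ and the same contiguity relation.

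If one prefers a derivation that does not invoke \cite{GuilleminPinchon98}, one can pass to the generating function $U(z)=\sum_{i\ge0}u_i z^i$: the recurrence turns into the first-order linear ODE $z^2(1-z)U'(z)=\big(\tfrac aq-(\tfrac aq-\tfrac\theta q)z+z^2\big)U(z)-\tfrac aq+\big(\tfrac aq-\tfrac\theta q-\tfrac aq u_1\big)z$, whose homogeneous solution $z^{\theta/q}e^{-(a/q)/z}(1-z)^{-1-\theta/q}$ has an essential singularity at the origin; requiring $U$ to be analytic at $0$ --- equivalently, killing the super-exponentially growing component of $(u_i)_{i\ge0}$ --- forces a single admissible value of $u_1$, which a short computation with the integrating factor identifies with $\eqref{eqn:LaplaceT}$, the function $\bar{\gamma}$ emerging precisely as the incomplete-gamma integral. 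In both routes the main obstacle is the same: isolating the probabilistically meaningful (minimal / analytic) solution and carrying the confluent-hypergeometric bookkeeping through without sign or branch errors; one should also bear in mind the poles of $\bar{\gamma}(-\theta/q,-a/q)$ at $\theta\in q\mathbb{N}^*$, where $\eqref{eqn:LaplaceT}$ has to be understood in the limiting (meromorphic) sense.
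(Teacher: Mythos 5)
Your first route is in spirit the paper's own proof --- both reduce the statement to the excursion-length formula of Guillemin and Pinchon --- but the way the reduction is organised differs, and you leave out the one step the paper actually needs. Guillemin and Pinchon treat only the normalised queue with serving rate $1$, and their busy-period formula is already stated in terms of $\bar{\gamma}$, so the paper's proof consists of quoting it verbatim for the queue with arrival rate $\tilde a=a/q$ and then transferring it to $(X_t)_{t\ge0}$ by the deterministic time change $\tilde X_t=X_{t/q}$, which gives $\tilde T=qT$ and hence $\mathbb{E}_1[e^{\theta T}]=\mathbb{E}_1[e^{(\theta/q)\tilde T}]$; no confluent-hypergeometric bookkeeping is required. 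In your sketch the substitution $s=-\theta/q$, $x=-a/q$ silently encodes this rescaling, but as written you apply a rate-$1$ result to a rate-$q$ process without justifying it --- that time-change (or an equivalent rescaling argument) should be made explicit, and the ``bookkeeping'' you defer to (matching their Kummer-function expression for general level $C$, specialised to $C=0$, against \eqref{eqn:LaplaceT}) is precisely the part of the argument you have not carried out, whereas your dictionary identities $\bar\gamma(s,x)=\tfrac1s M(s,s+1,-x)=x^{-s}\gamma(s,x)$ and $x\,\bar\gamma(s+1,x)=s\,\bar\gamma(s,x)-e^{-x}$ are correct and would indeed suffice.

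Your alternative derivation is genuinely different from the paper: the first-step recurrence $a\,u_{i+1}=(a+iq-\theta)u_i-iq\,u_{i-1}$ with $u_0=1$ is right, and passing to the generating function and selecting the solution analytic at $0$ (equivalently, the minimal solution of the recurrence) would give a self-contained proof valid directly for general $q$, at the price of having to justify carefully why $\bigl(\mathbb{E}_i[e^{\theta T}]\bigr)_{i\ge0}$ is that minimal solution and why the normalisation pins down $u_1$ uniquely --- which is essentially re-proving the continued-fraction analysis that \cite{GuilleminPinchon98} already supplies. What the paper's route buys is brevity (one citation plus a time change); what yours would buy is independence from the cited closed form, but in its present state it is a programme rather than a proof.
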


\begin{proof}
	Guillemin and Pinchon only give the formula when the serving rate is $q=1$: consider a $M/M/\infty$ queue $(Y_t)_{t\geq0}$ with arrival rate $\tilde{a}$ and serving rate $1$ and denote $\tilde{T} = \inf\{t>0, Y_t=0\}$, then we have for any $\theta>0$ 
	\begin{equation}\label{eqn:laplaceavantchangementtemps}
		\mathbb{E}_1[e^{\theta \tilde{T}}] = \frac{\tilde{a} - \theta}{\tilde{a}} - \frac{1}{\tilde{a} e^{-\tilde{a}} \bar{\gamma}(-\theta,-\tilde{a})}.
	\end{equation}
	We recover the formula for $(X_t)_{t\geq0}$ by a change of time. Indeed, let $(\tilde{X}_t)_{t\geq0} := (X_{\frac{t}{q}})_{t\geq0}$. Then, $(\tilde{X}_t)_{t\geq0}$ is still a $M/M/\infty$ queue but with modified transition rates : the arrival rate is now $\tilde{a} = \frac{a}{q}$ and the serving rate is $1$. Moreover, its time of death $\tilde{T} := \inf\{t>0, \tilde{X}_t=0\}$ is just a deformation of the one of $(X_t)_{t\geq0}$ : \[ \tilde{T} = q T.\] Therefore we have \[\mathbb{E}_1[e^{\theta T }] = \mathbb{E}_1[e^{\frac{\theta}{q} \tilde{T}}],\] and we can use $\eqref{eqn:laplaceavantchangementtemps}$ to obtain $\eqref{eqn:LaplaceT}$.
\end{proof}

In the same work, Guillemin and Pinchon \cite{GuilleminPinchon98} proved that $\bar{\gamma}(\cdot, -\frac{a}{q})$ admits an unique root in $(-1,0)$ which gives the first singularity point of $\mathbb{E}[e^{\theta T }]$, namely $\theta^*$ the minimal rate of survival of $(X_t)_{t\geq0}$. 

\begin{lemma}[Characterization of $\theta^*$]\label{thm:firstcharactheta*}
	The exponential rate of survival $\theta^*$ is the unique solution in $(0,q)$ of 
	\begin{equation}\label{eqn:firstcharactheta*}
		\bar{\gamma}(-\frac{\theta^*}{q},-\frac{a}{q}) =0,\qquad\text{i.e.}\qquad \sum_{k=0}^{\infty} \frac{(\frac{a}{q})^k}{(k-\frac{\theta^*}{q})k!} = 0.
	\end{equation}
\end{lemma}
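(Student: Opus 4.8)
The plan is to read $\theta^*$ off the explicit formula of Proposition~\ref{prop:LaplaceT}. Put $R(\theta):=\frac{a-\theta}{a}-\bigl(\tfrac{a}{q}e^{-a/q}\,\bar{\gamma}(-\tfrac{\theta}{q},-\tfrac{a}{q})\bigr)^{-1}$, so that $\mathbb{E}_1[e^{\theta T}]=R(\theta)$ wherever the left-hand side is finite, by~\eqref{eqn:LaplaceT}. For fixed $x$, the series defining $\bar{\gamma}(s,x)$ is analytic in $s$ off the nonpositive integers, with a simple pole at each of them; hence $\theta\mapsto\bar{\gamma}(-\tfrac{\theta}{q},-\tfrac{a}{q})$ is analytic on the strip $0<\Re\theta<q$, and there $R$ is meromorphic with poles exactly at the zeros of $\bar{\gamma}(-\tfrac{\theta}{q},-\tfrac{a}{q})$. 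So the task is to locate the smallest positive zero of $\theta\mapsto\bar{\gamma}(-\tfrac{\theta}{q},-\tfrac{a}{q})$ and to show it coincides with the abscissa $\theta^*$ of~\eqref{eqn:theta*apartirde1}.

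For the location I would substitute $s=-\theta/q\in(-1,0)$: by the Guillemin–Pinchon result recalled just above the statement, $s\mapsto\bar{\gamma}(s,-\tfrac{a}{q})$ has a unique zero $s^*$ in $(-1,0)$, hence $\theta\mapsto\bar{\gamma}(-\tfrac{\theta}{q},-\tfrac{a}{q})$ has a unique zero $\sigma:=-qs^*$ in $(0,q)$, and $R$ is analytic on $(0,\sigma)$. A sign inspection fixes the behaviour at $\sigma$: the $k=0$ term of the series equals $-q/\theta$, so $\bar{\gamma}(-\tfrac{\theta}{q},-\tfrac{a}{q})\to-\infty$ as $\theta\to0^+$; being continuous and zero-free on $(0,\sigma)$, it stays negative there and tends to $0^-$ as $\theta\uparrow\sigma$, whence $R(\theta)\to+\infty$ as $\theta\uparrow\sigma$.

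It remains to identify $\sigma$ with $\theta^*$. Since $\theta\mapsto\mathbb{E}_1[e^{\theta T}]$ is non-decreasing and equals $R$ wherever finite, it cannot remain finite up to $\sigma$, so $\theta^*\le\sigma$. The reverse inequality $\theta^*\ge\sigma$ is the delicate step: I would invoke the Pringsheim–Landau theorem, by which the Laplace transform of a positive measure has a singularity at its abscissa of convergence. Since $R$ is analytic on a complex neighbourhood of each point of $(0,\sigma)$ — there $-\theta/q$ stays away from $s^*$ and from the poles of $\bar{\gamma}$ — and, by a routine identity-theorem argument, continues $\mathbb{E}_1[e^{\theta T}]$ across the line $\Re\theta=\theta^*$, an abscissa $\theta^*<\sigma$ would contradict that theorem. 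Hence $\theta^*=\sigma=-qs^*$, i.e.\ $\bar{\gamma}(-\tfrac{\theta^*}{q},-\tfrac{a}{q})=0$; setting $x=-a/q$ in~\eqref{eqn:bargamma} rewrites this as $\sum_{k\ge0}\frac{(a/q)^k}{(k-\theta^*/q)k!}=0$, and uniqueness in $(0,q)$ is the bijection $\theta\mapsto-\theta/q$ transported from the uniqueness of $s^*$ in $(-1,0)$. Apart from the step $\theta^*\ge\sigma$, everything is bookkeeping around Proposition~\ref{prop:LaplaceT} and the Guillemin–Pinchon root; a self-contained write-up could instead cite Guillemin and Pinchon directly for the identification of that root with the first singularity of $\mathbb{E}_1[e^{\theta T}]$.
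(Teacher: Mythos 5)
Your argument is correct and follows the same route as the paper: read $\theta^*$ off the Laplace-transform formula of Proposition~\ref{prop:LaplaceT} as the first singularity, located at the unique Guillemin--Pinchon root of $\bar{\gamma}(\cdot,-\tfrac{a}{q})$ in $(-1,0)$. The only difference is one of detail: the paper delegates the identification of that root with the abscissa $\theta^*$ entirely to the citation of Guillemin and Pinchon, whereas you prove it yourself (sign analysis plus monotonicity for $\theta^*\le\sigma$, Pringsheim--Landau for $\theta^*\ge\sigma$), which makes your write-up a more self-contained version of the same proof.
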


\begin{proof}
	It is a direct application of $\eqref{eqn:theta*apartirde1}$, $\eqref{eqn:LaplaceT}$ and of the fact that $\bar{\gamma}(\cdot, -\frac{a}{q})$ has a unique root in $(-1,0)$.
\end{proof}

To conclude this first part, note that the definition $\eqref{eqn:bargamma}$ of $\bar{\gamma}$ implies that for any fixed $x\in\mathbb{C}$, the function $s\mapsto\bar{\gamma}(s,x)$ is meromorphic on $\mathbb{C}$ with the negative integers as poles. Moreover, for $s,x>0$, $\bar{\gamma}(s,x)$ admits an integral representation which is linked to the lower incomplete gamma function $\gamma(s,x)$, indeed we can write

\begin{equation}\label{eqn:gammaintegral}
	\begin{split}
		\bar{\gamma}(s,x) &= \frac{1}{x^s} \sum_{k=0}^{\infty} (-1)^k \frac{x^{k+s}}{(k+s)k!}, \\
		&= \frac{1}{x^s} \int_{0}^{x} \sum_{k=0}^{\infty} (-1)^k \frac{t^{s+k-1}}{k!} dt, \\
		&=\frac{1}{x^s} \int_{0}^{x} t^{s-1} e^{-t} dt = \frac{1}{x^s} \gamma(s,x).
	\end{split}
\end{equation}

This integral form acts as a link with the work of Mart\'{\i}nez and Ycart we present in the next section.

\section{Generating function of the minimal quasi-stationary distribution}

In this section, we study $\theta^*$ using its characterization as the rate of survival of the minimal quasi-stationary distribution $\nu^*$, which can be written as follows. 

\begin{proposition}
	Let $\nu$ be a quasi-stationary distribution of $(X_t)_{t\geq0}$, then its parameter $\theta(\nu)$ satisfies : 
	\begin{equation}\label{eqn:nu*system}
		\nu Q|_{\{1,2,\dots\}} = -\theta(\nu) \nu 
		\quad\text{and}\quad
		\theta(\nu) = Q(1,0) \nu(1).
	\end{equation}
	Moreover, for the minimal quasi-stationary distribution $\nu^*$, \[\theta(\nu^*) = \theta^*.\]
\end{proposition}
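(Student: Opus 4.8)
The plan is to verify the two displayed relations in \eqref{eqn:nu*system} directly from the quasi-stationarity identity \eqref{eqn:defqsd}, and then to identify $\theta(\nu^*)$ with $\theta^*$ using the general theory of quasi-stationary distributions for birth-and-death processes recalled in the introduction. For the first part, I would start from the fact, noted after \eqref{eqn:defqsd}, that under $\mathbb{P}_\nu$ the absorption time $T$ is exponential with parameter $\theta(\nu)$; more precisely the pair $(X_{t\wedge T},\mathbf{1}_{t<T})$ has a semigroup that, restricted to $\tilde E=\{1,2,\dots\}$, acts as $e^{t\,Q|_{\tilde E}}$ up to the killing. Differentiating \eqref{eqn:defqsd} at $t=0$ — equivalently, applying the generator to the function $i\mapsto \mathbf{1}_{\{i=j\}}$ and using that $\mathbb{P}_\nu(X_t=i\mid t<T)$ is constant in $t$ — yields that $\nu$, viewed as a row vector on $\tilde E$, is a left eigenvector of the sub-Markovian generator $Q|_{\tilde E}$ with eigenvalue $-\theta(\nu)$, which is the first identity. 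The second identity comes from conservation of probability mass: summing the eigenvector equation $\nu Q|_{\tilde E} = -\theta(\nu)\nu$ over all states in $\tilde E$ and using that each column of the full matrix $Q$ sums to $0$, the only surviving term is the flux into the absorbing state $0$, giving $\theta(\nu) = \sum_i \nu(i) q_{i,0} = q_{1,0}\,\nu(1)$ since $q_{i,0}=0$ for $i\geq 2$ in the $M/M/\infty$ queue. Here $Q(1,0)=q$ concretely, but I would keep the statement in the general form $Q(1,0)\nu(1)$.

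For the last assertion $\theta(\nu^*)=\theta^*$, I would invoke the results recalled in the introduction: for a birth-and-death process on $\mathbb{N}$ absorbed at $0$ with $\theta^*>0$, for every $\theta\in(0,\theta^*]$ there is a unique quasi-stationary distribution $\nu_\theta$ with parameter $\theta$, the correspondence $\theta\leftrightarrow\nu_\theta$ is a bijection onto the set of all quasi-stationary distributions, and the Yaglom-limit distribution $\nu^*$ of \eqref{eqn:defnu*} is exactly $\nu_{\theta^*}$ — see \cite{Ferrariexistenceqsd} and \cite{ColletMartinez_book}. Since the supremum of $\theta(\nu)$ over all quasi-stationary distributions is attained precisely at $\nu^*$ (stated just after \eqref{eqn:expkilling}) and this supremum equals $\theta^*$ by the bijection, we get $\theta(\nu^*)=\theta^*$. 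I should first check that $\theta^*>0$ holds for the $M/M/\infty$ queue so that this machinery applies; this is immediate from Proposition~\ref{prop:LaplaceT} (or Lemma~\ref{thm:firstcharactheta*}), which exhibits $\mathbb{E}_1[e^{\theta T}]<\infty$ for all $\theta$ in a nonempty interval $(0,q)$.

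The main obstacle is not conceptual but one of rigor in the first step: justifying the differentiation of \eqref{eqn:defqsd} at $t=0$ and the termwise manipulations of the (infinite) generator matrix on the non-compact state space $\mathbb{N}$. The clean way around this is to avoid differentiation entirely and argue algebraically: multiply \eqref{eqn:defqsd} by $\mathbb{P}_\nu(t<T)=e^{-\theta(\nu)t}$ to get $\mathbb{P}_\nu(X_t=i,\,t<T)=e^{-\theta(\nu)t}\nu(i)$ for all $t\geq0$, recognize the left-hand side as $\bigl(\nu\, e^{tQ|_{\tilde E}}\bigr)(i)$, so that $\nu\,e^{tQ|_{\tilde E}} = e^{-\theta(\nu)t}\nu$ for all $t$, and conclude the eigenvector relation by taking $\frac{\dd}{\dd t}$ of this finite-dimensional-looking identity at $t=0$ after truncating to finitely many coordinates, or by uniqueness of Laplace transforms. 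The mass-conservation step requires only that the relevant series converge, which follows from $\nu$ being a probability measure together with the linear growth of the rates; I would spell this out but expect no real difficulty. The remaining pieces — the bijection $\theta\leftrightarrow\nu_\theta$ and the identification of the Yaglom limit — are quoted wholesale from the cited literature and need no new argument here.
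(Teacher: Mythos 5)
The paper itself offers no proof of this proposition: it is stated as a standard fact, with the identification $\theta(\nu^*)=\theta^*$ and the bijection $\theta\leftrightarrow\nu_\theta$ quoted from the literature recalled in the introduction (\cite{Ferrariexistenceqsd}, \cite{vanDoornBDP91}, \cite{ColletMartinez_book}). Your proposal supplies exactly the standard derivation that the paper leaves implicit, and for the last assertion you cite the same results the paper does, so in substance you and the paper are aligned. Your route to the eigenvector identity (write $\mathbb{P}_\nu(X_t=i,\,t<T)=e^{-\theta(\nu)t}\nu(i)$, identify the left side with $\nu e^{tQ|_{\tilde E}}(i)$, differentiate at $t=0$; for a birth-and-death chain the right-hand side involves only three terms, so the forward-equation differentiation is unproblematic) is the usual one and is fine.

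One point in your mass-conservation step is genuinely wrong as justified, although the conclusion survives. First, a slip: it is the \emph{rows} of $Q$ that sum to zero, not the columns (with the paper's convention $(\nu Q)(i)=\sum_j \nu(j)q_{j,i}$, the exchange of summation makes the inner sum run over the row indexed by $j$). More seriously, you claim the interchange of sums is licensed because ``$\nu$ is a probability measure together with the linear growth of the rates''; absolute convergence of the double sum requires $\sum_j j\,\nu(j)<\infty$, and this \emph{fails} for every non-minimal quasi-stationary distribution of the $M/M/\infty$ queue: from the Mart\'{\i}nez--Ycart formula \eqref{eqn:generatingnu}, when $\theta(\nu)<\tilde{\theta}$ the bracket is strictly negative at $s=1$ and $\theta(\nu)/q\in(0,1)$, so $g_\nu'(1^-)=+\infty$, i.e.\ $\nu$ has infinite mean. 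The fix is the partial-sum version of your argument: summing the relation $(\nu Q|_{\tilde E})(i)=-\theta(\nu)\nu(i)$ for $i=1,\dots,N$ telescopes to
\begin{equation*}
\sum_{i=1}^{N}(\nu Q|_{\tilde E})(i) = -q\,\nu(1) - a\,\nu(N) + q(N+1)\,\nu(N+1),
\end{equation*}
and since the left side converges to $-\theta(\nu)$ and $\nu(N)\to0$, the boundary term $q(N+1)\nu(N+1)$ converges to some limit $L\ge 0$; $L>0$ would force $\nu(N)\asymp 1/N$, contradicting summability, so $L=0$ and $\theta(\nu)=q\,\nu(1)=Q(1,0)\nu(1)$. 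With that replacement your argument is complete.
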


This characterization is the most commonly used when computing the quasi-stationary distributions of birth and death processes : from $\eqref{eqn:nu*system}$ one can get a functional equation on the generating function of $\nu^*$. This is the method used by Mart\'{\i}nez and Ycart \cite{MartinezYcart_cutoff} to get the following result on the quasi-stationary distributions of $(X_t)_{t\geq0}$. 

\begin{proposition}[Mart\'{\i}nez, Ycart 2001]
	Let $\nu$ be a q.s.d of the process $(X_t)_{t\geq0}$ and denote by $g_\nu:s\mapsto\sum_{k=1}^{\infty}\nu(k)s^k$ its generating function. Then $g_\nu$ is given for any $s\in(0,1)$ by : 
	\begin{equation}\label{eqn:generatingnu}
		g_\nu(s) = 1 + (1-s)^{\frac{\theta(\nu)}{q}}e^{\frac{as}{q}}(-1 + \frac{a}{q}\int_{0}^{s}(1-x)^{-\frac{\theta(\nu)}{q}}e^{-\frac{ax}{q}}dx),
	\end{equation}
	where $\theta(\nu)=q\nu(1)$.
\end{proposition}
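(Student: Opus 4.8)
The plan is to run the standard argument for quasi-stationary distributions of birth-and-death chains: turn the eigenvector relation $\eqref{eqn:nu*system}$ into a first-order linear ODE for the generating function $g_\nu$, and then integrate it in closed form. First I would expand $\nu Q|_{\{1,2,\dots\}} = -\theta(\nu)\nu$ coefficient by coefficient. Since the only nonzero off-diagonal entries of the restricted generator are the birth rate $a$ from $j-1$ to $j$ and the death rate $(j+1)q$ from $j+1$ to $j$, reading off coordinate $j\ge1$ yields the three-term recurrence
\begin{equation*}
	a\,\nu(j-1) - (a+jq)\,\nu(j) + (j+1)q\,\nu(j+1) = -\theta(\nu)\,\nu(j), \qquad j\ge 1,
\end{equation*}
with the convention $\nu(0)=0$, which makes the $j=1$ line (having no incoming birth from the absorbing state $0$) a special case of the same formula.

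Next I would multiply the $j$-th relation by $s^j$ and sum over $j\ge1$. Because $\nu$ is a probability measure on $\{1,2,\dots\}$, the power series $g_\nu(s)=\sum_{k\ge1}\nu(k)s^k$ has radius of convergence at least $1$, hence is real-analytic on $(-1,1)$, so every index shift and term-by-term differentiation below is licit there. The three shifted sums contribute, respectively, $as\,g_\nu(s)$, $a\,g_\nu(s)+qs\,g_\nu'(s)$, and $q\,g_\nu'(s)-q\nu(1)$; substituting the second identity of $\eqref{eqn:nu*system}$, namely $\theta(\nu)=q\nu(1)$, so that the leftover constant $-q\nu(1)$ becomes $-\theta(\nu)$, and collecting terms, one arrives at
\begin{equation*}
	q(1-s)\,g_\nu'(s) = \theta(\nu)\bigl(1-g_\nu(s)\bigr) + a(1-s)\,g_\nu(s), \qquad s\in(0,1),
\end{equation*}
subject to $g_\nu(0)=0$ (as $\nu$ charges only $\{1,2,\dots\}$).

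Then I would solve this linear ODE. Writing $h:=1-g_\nu$ (so that $h(0)=1$) converts it into $h'-\bigl(\tfrac aq-\tfrac{\theta(\nu)}{q(1-s)}\bigr)h=-\tfrac aq$, whose integrating factor, normalized at $0$, is $\mu(s)=e^{-as/q}(1-s)^{-\theta(\nu)/q}$. Integrating $(\mu h)'=-\tfrac aq\,\mu$ from $0$ to $s$ and using $\mu(0)h(0)=1$ gives
\begin{equation*}
	\mu(s)\,h(s)=1-\frac aq\int_0^s e^{-ax/q}(1-x)^{-\theta(\nu)/q}\,dx,
\end{equation*}
and dividing by $\mu(s)$ and returning to $g_\nu=1-h$ produces exactly $\eqref{eqn:generatingnu}$.

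The computation is essentially all bookkeeping; the one point that deserves care is the boundary term at $j=1$, which produces the stray constant $-q\nu(1)$ and is precisely where the identity $\theta(\nu)=q\nu(1)$ from $\eqref{eqn:nu*system}$ must be used to close the equation into the clean form above. Beyond that, since $q(1-s)$ does not vanish on $(0,1)$, the ODE is regular there and has a unique solution with prescribed value at $0$; hence the computation shows both that $g_\nu$ satisfies $\eqref{eqn:generatingnu}$ and that the right-hand side of $\eqref{eqn:generatingnu}$ is a genuine real-analytic function on $(0,1)$, so the stated identity holds throughout $(0,1)$.
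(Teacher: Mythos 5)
Your derivation is correct: the coefficientwise expansion of $\nu Q|_{\{1,2,\dots\}}=-\theta(\nu)\nu$, the resulting ODE $q(1-s)g_\nu'(s)=\theta(\nu)(1-g_\nu(s))+a(1-s)g_\nu(s)$ with $g_\nu(0)=0$ and $\theta(\nu)=q\nu(1)$ absorbing the $j=1$ boundary term, and the integration via the factor $e^{-as/q}(1-s)^{-\theta(\nu)/q}$ all check out and reproduce $\eqref{eqn:generatingnu}$. The paper itself does not prove this proposition but cites Mart\'{\i}nez--Ycart and describes exactly this generating-function method, so your argument is the standard one the paper has in mind.
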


Moreover, they remarked that the function $\theta \mapsto \frac{a}{q}\int_{0}^{1}(1-x)^{-\frac{\theta}{q}}e^{-\frac{ax}{q}}dx$ is continuous and strictly increasing, with values ranging from $1 - e^{-\frac{a}{q}}$ to $+\infty$. Hence, there exists a unique $\tilde{\theta}\in(0,q)$ such that \[ \frac{a}{q}\int_{0}^{1}(1-x)^{-\frac{\tilde{\theta}}{q}}e^{-\frac{ax}{q}}dx = 1. \] Therefore, if we consider a quasi-stationary distribution $\nu$ of $(X_t)_{t\geq0}$ and suppose that $\theta(\nu)>\tilde{\theta}$, then by $\eqref{eqn:generatingnu}$ we have $g_\nu(s) > 1$ for $s$ close enough to $1$ which contradicts the fact that $g_\nu$ is a generating function of a probability measure. So \[ \theta^* \leq \tilde{\theta}. \]

Through numerical simulations, Mart\'{\i}nez and Ycart conjectured that this last inequality is in fact an equality. Thanks to the characterization of Lemma \ref{thm:firstcharactheta*} we can prove this fact. We start the proof with two technical lemmas.

Firstly, we want to understand the domain of definition of the function $F$ given by 
\begin{equation}\label{eqn:defF}
	F: (s,x) \mapsto -x \int_{0}^{1} (1-y)^s e^{xy} dy.
\end{equation}
and its properties over it.

\begin{lemma}\label{lemma:Fholo}
	Denote $\Omega := \{ s\in\mathbb{C}, \mathcal{R}e(s)>-1\}$. Then $F$ verifies : 
	\begin{itemize}
		\item for any fixed $x\in\mathbb{C}$, the function $s\mapsto F(s,x)$ is holomorphic on $\Omega$;
		\item for any fixed $s\in\Omega$, the function $x\mapsto F(s,x)$ is holomorphic on $\mathbb{C}.$
	\end{itemize}
\end{lemma}

\begin{proof}
	We use a classic result on holomorphicity for functions defined by an integral (see for example \cite[Theorem I.7. p.~308]{queffelec2020analyse} ). Let us fix $x\in\mathbb{C}$. \\
	First, for $y\in(0,1)$, the function $s\mapsto(1-y)^{s} e^{xy}$ is holomorphic on $\Omega$, as can be seen by writing it as \[(1-y)^{s} e^{xy} = e^{s\ln(1-y)+xy}.\] 
	Secondly, we show that for $s$ in any compact subdomain $K$ of $\Omega$, the function $y\mapsto(1-y)^{s} e^{xy}$ is dominated by a positive and integrable function on $(0,1)$. But, for such a set $K$, there exists $r_{\min}$ such that for any $s\in K$ \[\mathcal{R}e(s)>r_{\min} >-1.\] Therefore, for any $s\in K$ and $y\in(0,1)$ \[|(1-y)^s e^{xy}| = (1-y)^{\mathcal{R}e(s)} e^{\mathcal{R}e(x)y} \leq (1-y)^{r_{\min}} e^{\mathcal{R}e(x)y}. \] Since $r_{\min} > -1$, the function $y\mapsto(1-y)^{r_{\min}} e^{\mathcal{R}e(x)y}$ is indeed positive and integrable on $(0,1)$. \\
	This also implies that for fixed $s\in\Omega$, the function $y\mapsto(1-y)^{s} e^{xy}$ is integrable on $(0,1)$. These three assertions ensure that $s\mapsto F(s,x)$ is holomorphic on $\Omega$.
	
	For the second function, we use similar arguments. Let us fix $s\in\Omega$. For any $y\in(0,1)$, the function $x\mapsto(1-y)^{s} e^{xy}$ is holomorphic on $\mathbb{C}$. The second point is the same as before. And, finally, let $C$ be a compact subdomain of $\mathbb{C}$, then, there exists $R_{\max}>0$ such that for any $x\in C$, \[ \mathcal{R}e(x)\leq R_{\max}. \] Therefore, for any $x\in C$ and $y\in(0,1)$ \[ |(1-y)^s e^{xy}| \leq (1-y)^{\mathcal{R}e(s)} e^{R_{\max} y }, \] which concludes the proof.
\end{proof}

We remark that the expression $\eqref{eqn:defF}$ is not far from the integral representation of $\bar{\gamma}$ given by $\eqref{eqn:gammaintegral}$. In fact, we can prove the following link between $F$ and $\bar{\gamma}$.

\begin{lemma}\label{lem:Feqgamma}
	For any $s,x\in(0,\infty)$, we have the following identity : 
	\begin{equation}\label{eqn:Feqgamma}
		F(s,x) = 1 - s e^x \bar{\gamma}(s,x).
	\end{equation}
\end{lemma}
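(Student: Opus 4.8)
The plan is to reduce $F(s,x)$ to a classical Gamma-type integral by a single integration by parts, and then to recognize the resulting integral via the integral representation $\eqref{eqn:gammaintegral}$ of $\bar{\gamma}$. Fix $s,x\in(0,\infty)$. In $\eqref{eqn:defF}$, integrate by parts with $u=(1-y)^s$ and $\diff v = e^{xy}\,dy$, so that $\diff u = -s(1-y)^{s-1}\,dy$ and $v = e^{xy}/x$. Since $s>0$, the boundary term at $y=1$ vanishes, while at $y=0$ it contributes $-1/x$; collecting terms yields
\[
	F(s,x) = 1 - s\int_{0}^{1}(1-y)^{s-1}e^{xy}\,dy .
\]

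Next I would evaluate the remaining integral by the substitution $t = x(1-y)$ (valid because $x>0$), under which $y$ ranging over $(0,1)$ corresponds to $t$ ranging over $(0,x)$, and $(1-y)^{s-1}e^{xy}\,dy$ becomes $\frac{e^x}{x^s}\,t^{s-1}e^{-t}\,dt$. Hence
\[
	\int_{0}^{1}(1-y)^{s-1}e^{xy}\,dy = \frac{e^x}{x^s}\int_{0}^{x}t^{s-1}e^{-t}\,dt = \frac{e^x}{x^s}\gamma(s,x),
\]
and by $\eqref{eqn:gammaintegral}$ one has $\frac{1}{x^s}\gamma(s,x) = \bar{\gamma}(s,x)$, so the integral equals $e^x\bar{\gamma}(s,x)$. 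Substituting back gives exactly $\eqref{eqn:Feqgamma}$.

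I do not expect a real obstacle here: the only points needing a word of justification are that the integration-by-parts boundary term at $y=1$ vanishes, which uses $s>0$, and that the change of variables is legitimate with finite positive limits, which uses $x>0$ — both are exactly the hypotheses of the lemma. One could alternatively expand $e^{xy}$ as a power series and integrate term by term using the Beta integral $\int_0^1(1-y)^s y^n\,dy$, then resum, but that route requires justifying an interchange of summation and integration and is strictly longer than the integration-by-parts argument above.
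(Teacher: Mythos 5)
Your proof is correct and follows essentially the same route as the paper: an integration by parts producing the boundary term $1$, then a change of variables reducing the remaining integral to $\frac{1}{x^s}\gamma(s,x)=\bar{\gamma}(s,x)$ via $\eqref{eqn:gammaintegral}$ (the paper merely performs the substitution $u=1-y$ before the integration by parts rather than after, which is a cosmetic difference). The justifications you flag — the vanishing boundary term at $y=1$ for $s>0$ and the legitimacy of the substitution for $x>0$ — are exactly what is needed.
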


\begin{proof}
	Let $s,x>0$, taking $u=1-y$ under the integral in $\eqref{eqn:defF}$ we have: \[ F(s,x)=-x e^x \int_{0}^{1} u^s e^{-u x } du.\]  We integrate by parts and get: 
	\begin{equation*}
		\begin{split}
			F(s,x) &= -x e^x (-\frac{e^{-x}}{x} + \frac{s}{x} \int_{0}^{1} u^{s-1} e^{-u x} du), \\
			& = 1 - s e^x \int_{0}^{1} u^{s-1} e^{-u x} du.
		\end{split}
	\end{equation*}
	Finally we take $y=ux$ under the integral, it follows: \[F(s,x) = 1 - s e^x \frac{1}{x^s} \int_{0}^{x} y^{s-1} e^{-y} dy = 1 - s e^x \bar{\gamma}(s,x), \]
	using the integral form of $\bar{\gamma}$ given by $\eqref{eqn:gammaintegral}$.
\end{proof}

Recall that we want to prove that $\theta^*$ is solution of $\eqref{eqn:theta*isgamma*}$, i.e \[F(-\frac{\theta^*}{q}, -\frac{a}{q}) = 1,\] so we want to take $s=-\frac{\theta^*}{q}$ and $x= -\frac{a}{q}$ in $\eqref{eqn:Feqgamma}$ and use our first characterization of $\theta^*$ $\eqref{eqn:firstcharactheta*}$ to conclude. But both conditions $s>0$ and $x>0$ are necessary in our proof of Lemma \ref{lem:Feqgamma}. Therefore, we need to extend the identity $\eqref{eqn:Feqgamma}$ to negative value of $s$ and $x$. We do so by complex analysis arguments. 

\begin{proof}[Proof of Theorem \ref{thm:main}]
	It is enough to show that Identity $\eqref{eqn:Feqgamma}$ is verified for any $s\in\Omega$ and $x\in\mathbb{C}$, then $\eqref{eqn:theta*isgamma*}$ will follow as a corollary of Lemma \ref{thm:firstcharactheta*}. This is done in two steps. 
	
	First, let us fix $s>0$. We know from a previous remark and Lemma \ref{lemma:Fholo} that both $x\mapsto1 - se^{x} \bar{\gamma}(s,x)$ and $x\mapsto F(s, x)$ are holomorphic on $\mathbb{C}$. Moreover, we showed in Lemma \ref{lem:Feqgamma} that they are equal on the real half-line $(0,+\infty)$ so, by analycity, they are equal on the plane $\mathbb{C}$. Hence, we have for any $x\in\mathbb{C}$ that for any $s>0$ \[ F(s,x) = 1 - s e^x \bar{\gamma}(s,x) .\] Therefore, we can proceed to the second step and fix $x\in\mathbb{C}$ and use symmetric arguments to extend the last equality, namely $\eqref{eqn:Feqgamma}$, to $s\in\Omega$.
	
	Finally, since $\theta^*$ is the unique solution in $(0,q)$ of $\eqref{eqn:firstcharactheta*}$, it is also the unique solution in $(0,q)$ of $\eqref{eqn:theta*isgamma*}$, which concludes the proof. 
\end{proof}

\paragraph{Acknowledgments.}
	We acknowledge financial support by the Investissements d'Avenir programme 10-LABX-0017, Sorbonne Paris Cité, Laboratoire d'excellence INFLAMEX.

\end{document}